\DeclareFontFamily{T1}{calligra}{}
\DeclareFontShape{T1}{calligra}{m}{n} {<-> callig15}{}
\newtheorem*{prop}{Proposition}
\theoremstyle{definition}
\theoremstyle{remark}
\newtheorem*{conj}{Conjecture}
\numberwithin{equation}{subsection}
\newcommand{\B}{\mathbf{B}}
\newcommand{\Co}{\mathbb{C}}
\begin{document}
\title[A conjecture of Panyushev]{B-orbits in abelian nilradicals of types $B, C$ and $D$: towards a conjecture of Panyushev}
\author[N. Barnea]{Nurit Barnea}
\thanks{N. Barnea: partially supported by Israel Scientific Foundation grant 797/14}
\address{Department of Mathematics,
University of Haifa, Haifa 31905, Israel}
\email{barnea.nurit@gmail.com}
\author[A. Melnikov]{Anna Melnikov}
\address{Department of Mathematics,
University of Haifa, Haifa 31905, Israel}
\email{melnikov@math.haifa.ac.il}

\begin{abstract}
Let $B$ be a Borel subgroup of a semisimple algebraic group $G$ and let $\mathfrak m$ be an abelian nilradical in $\mathfrak b={\rm Lie} (B)$. Using subsets of strongly orthogonal roots in the subset of positive roots corresponding to $\mathfrak m$,
D. Panyushev \cite{Pan} gives in particular classification of $B-$orbits in $\mathfrak m$ and ${\mathfrak m}^*$ and states  general conjectures on the closure and dimensions of the $B-$orbits in both $\mathfrak m$ and ${\mathfrak m}^*$ in terms of involutions of the Weyl group. Using Pyasetskii correspondence between 
$B-$orbits in $\mathfrak m$ and ${\mathfrak m}^*$ he shows  the equivalence of these two  conjectures. In this Note we prove his conjecture in types $B_n, C_n$ and $D_n$ for adjoint case.
\end{abstract}
\maketitle

\section{Abelian nilradicals and Panyushev's conjecture}
\subsection{Minimal nilradicals}\label{1.1}
 Let $G$ be a semisimple linear algebraic group
over $\mathbb{C}$ and let $\mathfrak{g}$ be its Lie algebra. Let $B$ be its Borel subgroup and  $\mathfrak b={\rm Lie} (B)$. Let
$\mathfrak g=\mathfrak n\oplus\mathfrak h\oplus\mathfrak n^-$ be its corresponding triangular decomposition, where $\mathfrak b=\mathfrak n\oplus\mathfrak h.$ $B$ acts adjointly on $\mathfrak n$. For $x\in\mathfrak n$ let $B.x$ denote its orbit.

Since the description of $B-$orbits in $\mathfrak n$ immediately reduces to simple Lie algebras in what follows we assume that $\mathfrak g$ is simple.

Let $R$ be the root system of $\mathfrak g$ and $W$ its Weyl group. For $\alpha\in R$ let $s_\alpha$ be the corresponding reflection in $W$. 

Let $R^+$ (resp. $R^-$) denote the subset of positive (resp. negative) roots. For $\alpha\in R$ let $X_\alpha$ denote the standard root vector in $\mathfrak g$  so that $\mathfrak n=\bigoplus\limits_{\alpha\in R^+}{\mathbb C} X_\alpha.$
Let  $\Delta =\{\alpha_i\}_{i=1}^n\subset R^+$ be a set of simple roots. Let $\theta$ be the maximal root in $R^+.$ 

Recall that any standard parabolic subgroup $P$ of $G$ is of the form $P=L\ltimes M$ where $L$ is a standard Levy subgroup and $M$ is the unipotent radical of $P.$ If $R_L$ is the root system of $\mathfrak l={\rm Lie}(L)$ then $\Delta_L=\Delta\cap R_L$. Let $W_P$ denote Weyl group of $\mathfrak l.$ Let $\widehat w$ be the longest element of $W_P$.

 $P$ is maximal if and only if $\Delta_L=\Delta\setminus\{\alpha_i\}$. We will write $P_{\alpha_i}=P$, $M_{\alpha_i}=M$, $R_{\alpha_i}=R_L$, $R^+_{\alpha_i}=R_L^+$ and $W_{\alpha_i}=W_P$ in this case. We put ${\overline R}_{\alpha_i}^+:=R^+\setminus R^+_{\alpha_i}$.   Put $\mathfrak m_{\alpha_i}:={\rm Lie}(M_{\alpha_i})=\bigoplus\limits_{
\alpha\in {\overline R}_{\alpha_i}^+}{\mathbb C} X_\alpha.$

A nilradical $\mathfrak m$ is abelian if and only if $\mathfrak m=\mathfrak m_{\alpha_i}$  and in $\theta=\sum\limits_{j=1}^n k_j\alpha_j$ one has $k_i=1$ 
(cf. \cite{Pan} for details). 
\subsection{Strongly orthogonal sets and $B-$orbits in $A_n,B_n,C_n,D_n$}\label{1.2}
A set $\mathcal S\subset R^+$ is called {\it strongly orthogonal} if $\alpha\pm \beta\not\in R$ for any $\alpha, \beta\in \mathcal S$. 
Given a strongly orthogonal set $\mathcal S=\{\beta_i\}_{i=1}^k$ put $\sigma_{\mathcal S}:=\prod\limits_{i=1}^k s_{\beta_i}$. Note that this is an involution.
As it is shown in \cite{Pan} each $B-$orbit in an abelian nilradical $\mathfrak m_{\alpha_i}$ has a unique representative of form $\sum\limits_{\alpha\in \mathcal S}X_\alpha$ where $\mathcal S\subset\overline{R}_{\alpha_i}^+$ is strongly orthogonal.

We choose the following root systems: 
\begin{itemize}
\item{} In $A_n:\ R=\{e_j-e_i\}_{1\leq i\ne j \leq n+1},\ R^+=\{e_j-e_i\}_{1\leq i<j \leq n+1},\ \Delta=\{e_{i+1}-e_i\}_{i=1}^{n}$;
\item{} In $C_n:\ R=\{\pm (e_j\pm e_i)\}_{1\leq i<j \leq n}\cup\{\pm 2e_i\}_{i=1}^n,\  R^+=\{e_j\pm e_i\}_{1\leq i<j \leq n}\cup\{ 2e_i\}_{i=1}^n,$ $\Delta=\{2e_1,e_{i+1}-e_i\}_{i=1}^{n-1}$;
\item{} In $B_n:\ R=\{\pm (e_j\pm e_i)\}_{1\leq i<j \leq n}\cup\{\pm e_i\}_{i=1}^n,\ R^+=\{e_j\pm e_i\}_{1\leq i<j \leq n}\cup\{ e_i\}_{i=1}^n,$
$\Delta=\{e_1,e_{i+1}-e_i\}_{i=1}^{n-1}$;
\item{} In $D_n:\ R=\{\pm (e_j\pm e_i)\}_{1\leq i<j \leq n},\ R^+=\{e_j\pm e_i\}_{1\leq i<j \leq n},$\\ $\Delta=\{e_2+e_1,e_{i+1}-e_i\}_{i=1}^{n-1}$.
\end{itemize} 
We call roots $\alpha=e_j\pm e_i$ or $\alpha=e_i (2e_i)$, $\beta=e_l\pm e_k$ or $\beta=e_k (2e_k)$ {\it disjoint} if $\{i,j\}\cap\{k,l\}=\emptyset.$

In $A_n$ and $C_n$ the roots $\alpha,\beta$ are strongly orthogonal iff they are disjoint. In these two cases, (as well as in $D_n$) root vector $X_{\alpha}$ is of nilpotency order two. As it is shown in \cite{Mel, B-M} in theses two cases each $B-$orbit of nipotency order two in $\mathfrak n$ has a unique representative of the form
$\sum\limits_{i=1}^k X_{\beta_i}$ where $\{\beta_i\}_{i=1}^k\subset R^+$ is a strongly orthogonal (i.e. pairwise disjoint) set. On the other hand, each involution of $W$ can be written as a (commutative) product of pairwise disjoint reflections in the unique way, so   there is a one-to-one correspondence between the the strongly orthogonal sets  and involutions of $W$ so that $B-$orbits of nilpotency order 2 are indexed by involutions in these two cases. 

As for the cases $B_n$ and $D_n$ there is no bijection between $B-$orbits of nilpotent order 2 in $\mathfrak n$ and involutions of $W$ because of two reasons.
 First of all,a  root vector  $X_{e_i}$ in $B_n$ and a sum of strongly orthogonal root vectors $X_{e_j-e_i}+ X_{e_j+e_i}$ (roots $e_j-e_i$ and $e_j+e_i$ are strongly orthogonal in $\mathfrak{so}_n$) are matrices of nilpotency order 3 both in $B_n$ and $D_n$. The second obstacle  is that different sets of strongly orthogonal roots correspond to the same involution in $W,$ for example, $\sigma_{\{e_j-e_i,e_j+e_i\}}=\sigma_{\{e_i,e_j\}}$ but $X_{e_j-e_i}+ X_{e_j+e_i}$ and $X_{e_i}+X_{e_j}$ are representatives of different $B-$orbits (of nilpotency order 3) in $B_n$. Exactly in the same way $\sigma_{\{e_i,e_j,e_k,e_l\}}$ is connected to 3 different strongly orthogonal sets in $D_n$ namely $\{e_{t_1}-e_{s_1}, e_{t_1}+e_{s_1}, e_{t_2}-e_{s_2}, e_{t_2}+e_{s_2}\}$
where $\{s_1,s_2,t_1,t_2\}=\{i,j,k,l\}$ and $s_r<t_r$ for $r=1,2$ (and additional 7 different strongly orthogonal sets in $B_n$) and the corresponding sums of roots are representatives of different $B-$orbits (of nilpotency order 3). However when we restrict ourselves to abelian nilradicals there is a bijection between the sets of strongly orthogonal roots in $\overline{R}^+_{\alpha_i}$ and subset of involutions of $W$ so that $B-$orbits are indexed by involutions inside abelian nilradicals in the unique way. Some of these orbits are of nilpotency order 3. 
\subsection{Abelian nilradicals in $A_n,B_n,C_n,D_n$}\label{1.3}
Abelian nilradicals in $A_n,B_n,C_n,D_n$ are (cf. \cite{Pan}, for example for the details).
 
\begin{itemize}
\item[(i)] In ${\mathfrak{sl}}_n$ any $\mathfrak m_{e_{k+1}-e_k}$ is abelian so that there are $n-1$ abelian nilradicals. They are of the form
$${\mathfrak m}_{e_{k+1}-e_k}=\bigoplus\limits_{1\leq i\leq k<j\leq n}\Co X_{e_j-e_i}$$
 One can see at once that in this  case  ${\mathfrak m}_{e_{k+1}-e_k}$ is a subspace of matrices of nilpotency order 2 and respectively all $B-$orbits there are indexed by  sets of pairwise disjoint roots $\{e_{j_s}-e_{i_s}\}_{s=1}^m$ where $i_s\leq k$ and $j_s\geq k+1$ for any $s\ :\ 1\leq s\leq m$. 

\item[(ii)]\ In $\mathfrak{sp}_{2n}$ the abelian nilradical is unique and it is 
$$\mathfrak m_{2e_1}=\bigoplus\limits_{1\leq i<j\leq n}\Co X_{e_j+e_i}\oplus\bigoplus\limits_{i=1}^n\Co X_{2e_i}.$$
Again this is a subspace of matrices of nilpotency order 2, so that all the $B-$orbits there are indexed by sets of pairwise disjoint roots $\{2e_{i_s}\}_{s=1}^l\cup\{e_{k_t}+e_{j_t}\}_{t=1}^m$.
\item[(iii)]\ \ In $\mathfrak{so}_{2n+1}$ the abelian nilradical is unique and it is
$$\mathfrak m_{e_n-e_{n-1}}=\bigoplus\limits_{i=1}^{n-1}\Co X_{e_n-e_i}\oplus\bigoplus\limits_{i=1}^{n-1}\Co X_{e_n+e_i}\oplus\Co X_{e_n}.$$
By \cite{Pan}   $\{X_{e_n},X_{e_n\pm e_i},X_{e_n- e_i}+X_{e_n+ e_i}\}_{i=1}^{n-1}$ is the set of the (unique) representatives of $B-$orbits in the form of sums of strongly orthogonal root vectors. Note that the corresponding set of involutions $\{s_{e_n}, s_{e_n\pm e_i}, s_{e_n}s_{e_i}\}_{i=1}^{n-1}$ is defined uniquely on this subset.
\item[(iv)]\ In $\mathfrak{so}_{2n}$ there are 3 abelian nilradicals; two of them are isomorphic, namely,  $\mathfrak m_{e_2-e_1}\cong\mathfrak m_{e_2+e_1}$. It is enough to consider 
$$\mathfrak m_{e_2+e_1}=\bigoplus\limits_{1\leq i<j\leq n}\Co X_{e_j+e_i}$$
This is the subspace of matrices of nilpotency order 2 and a $B-$orbit in it has a unique representative in the form $\sum_{s=1}^m X_{e_{j_s}+e_{i_s}}$ where $\{e_{j_s}+e_{i_s}\}_{s=1}^m$ is a set of pairwise disjoint roots.

The third nilradical is
$$\mathfrak m_{e_n-e_{n-1}}=\bigoplus\limits_{i=1}^{n-1}X_{e_n-e_i}\oplus\bigoplus\limits_{i=1}^{n-1}X_{e_n+e_i}.$$
By \cite{Pan}  $\{X_{e_n\pm e_i},\  X_{e_n- e_i}+X_{e_n+ e_i}\}_{i=1}^{n-1}$ is the set of the (unique) representatives of $B-$orbits in the form of sums of strongly orthogonal root vectors. Note that the corresponding set of involutions $\{s_{e_n\pm e_i}, s_{e_n}s_{e_i}\}_{i=1}^{n-1}$ is defined uniquely on this subset.
\end{itemize}
In particular, as we see, all $B-$orbits in an abelian nilradical for $A_n,B_n,C_n$ and $D_n$ are indexed by strongly orthogonal subsets in $\overline{R}^+_{\alpha_i}$. For a strongly orthogonal set $\mathcal S\subset \overline{R}^+_{\alpha_i}$ put $\B_{\mathcal S}:=B.(\sum\limits_{\alpha\in \mathcal S}X_\alpha)$. 
\subsection{Panyushev's conjecture}\label{1.4}
To formulate the conjecture we need the following notation.
For $w\in W$ put $\ell(w)$ to be its length, that is $\ell(w):=\#\{\alpha\in R^+\ :\ w(\alpha)\in R^-\}$. For a strongly orthogonal set $\mathcal S$ let $\#(\mathcal S)$ denote its cardinality. Let $\leq$ denote Bruhat order on $W.$

Respectively, for (coadjoint) $B-$orbits in $\mathfrak m_\alpha^*$ Panyushev shows that they are labeled by the same strongly orthogonal sets $\mathcal S$ and we denote them by $\B^*_{\mathcal S}$.
\begin{conj} (Panyushev) Let $\mathfrak m_\alpha$ be an abelian nilradical in a simple $\mathfrak  g$, and $W_\alpha$ be the corresponding Weyl group. Let $\widehat w$ denote the longest element of $W_{\alpha}$.

 Let $\mathcal S, \mathcal S'\subset \overline{R}^+_\alpha$ be strongly orthogonal and let $\sigma=\sigma_{\mathcal S},\sigma'=\sigma_{\mathcal S'}$ . Then
\begin{itemize}
\item[i)] $\B_{\mathcal S}\subset\overline\B_{\mathcal S'}$ if and only if $\widehat w\sigma\widehat w\leq \widehat w\sigma'\widehat w$.
\item[ii)] $\dim B_{\mathcal S}=\frac{\ell(\widehat w\sigma\widehat w)+\#(\mathcal S)}{2}$;
\end{itemize}
Respectively, for coadjoint orbits one has
\begin{itemize}
\item[$i^*$)] $\B^*_{\mathcal S}\subset\overline{\B^*}_{\mathcal S'}$ if and only if $\sigma\leq \sigma'$.
\item[$ii^*$)]\ $\dim \B^*_{\mathcal S}=\frac{\ell(\sigma)+\#(\mathcal S)}{2}$;
\end{itemize} 
\end{conj}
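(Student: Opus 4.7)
The plan is to prove the adjoint statements $(i)$ and $(ii)$ directly; the coadjoint statements $(i^*)$ and $(ii^*)$ then follow automatically by the Pyasetskii equivalence already established in \cite{Pan}. We proceed case by case through the abelian nilradicals of classical type.

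For the nilradicals consisting entirely of matrices of nilpotency order $2$ — namely $\mathfrak m_{2e_1}\subset\mathfrak{sp}_{2n}$ (type $C_n$) and $\mathfrak m_{e_2+e_1}\cong\mathfrak m_{e_2-e_1}\subset\mathfrak{so}_{2n}$ (the two small abelian nilradicals in type $D_n$) — the $B$-orbits inside $\mathfrak m_{\alpha_i}$ form a subset of the $B$-orbits of nilpotency order $2$ in the full $\mathfrak n$, whose closure order and dimensions were already computed in \cite{Mel} and \cite{B-M} in terms of involutions of $W$. The task in these cases is threefold: (a) match the parametrization by strongly orthogonal sets $\mathcal S\subset\overline{R}^+_{\alpha_i}$ with the parametrization by the induced involutions $\sigma=\sigma_{\mathcal S}$; (b) translate the known dimension formula into the form $(\ell(\widehat w\sigma\widehat w)+\#\mathcal S)/2$ using the explicit shape of $\widehat w$ for the relevant Levi; (c) verify that the Bruhat closure statement from \cite{Mel,B-M} matches $\widehat w\sigma\widehat w\le \widehat w\sigma'\widehat w$ after conjugation.

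The remaining two nilradicals $\mathfrak m_{e_n-e_{n-1}}\subset\mathfrak{so}_{2n+1}$ and $\mathfrak m_{e_n-e_{n-1}}\subset\mathfrak{so}_{2n}$ contain orbits of nilpotency order $3$ and are not covered by the above theory. By the discussion in \S\ref{1.3}, however, the strongly orthogonal subsets of $\overline{R}^+_{e_n-e_{n-1}}$ are very restricted: besides $\emptyset$, the only possibilities are $\{e_n\}$ (in type $B_n$ only), the singletons $\{e_n\pm e_i\}$ for $1\le i\le n-1$, and the pairs $\{e_n-e_i,\,e_n+e_i\}$ for $1\le i\le n-1$. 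There are thus only $O(n)$ orbits, which I would treat by direct computation: derive the dimensions from the stabilizer of an explicit representative in $B$, and establish the closure relations by exhibiting explicit degenerations through $1$-parameter subgroups of $B$, then match everything against the conjectural Bruhat data for $\widehat w\sigma\widehat w$.

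The main obstacle is this last step. The pair $\{e_n-e_i,\,e_n+e_i\}$ produces the involution $\sigma=s_{e_n-e_i}s_{e_n+e_i}=s_{e_n}s_{e_i}$ and labels an orbit of nilpotency order $3$, while the singletons $\{e_n\pm e_i\}$ produce the reflections $s_{e_n\pm e_i}$ and label orbits of nilpotency order $2$. Sorting out the correct triangle of closure relations among the three orbits indexed by $\{e_n-e_i\}$, $\{e_n+e_i\}$ and $\{e_n-e_i,\,e_n+e_i\}$ for each fixed $i$, and then assembling these local pictures across varying $i$ into the global $\widehat w$-conjugated Bruhat picture — in particular, controlling how the ``long'' pair involutions interleave with the reflections in Bruhat order once conjugated by the longest element of the Levi $W(B_{n-1})$ or $W(D_{n-1})$ — is the central combinatorial difficulty of the proof.
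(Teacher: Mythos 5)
Your plan follows essentially the same route as the paper: adjoint $\Rightarrow$ coadjoint via the Pyasetskii correspondence, reduction of the nilpotency-order-$2$ nilradicals to the known closure order and dimensions of square-zero $B$-orbits from \cite{Mel,B-M} (matched against conjugated Bruhat order via \cite{Pr}), and explicit one-parameter degenerations plus torus action for the finitely many orbits in $\mathfrak m_{e_n-e_{n-1}}$ in types $B_n$ and $D_n$, with the same triangle of relations among $\{e_n-e_i\}$, $\{e_n+e_i\}$, $\{e_n-e_i,e_n+e_i\}$ as the key point. One small correction: the $D_n$ nilradical $\mathfrak m_{e_2+e_1}$ is not directly covered by \cite{Mel,B-M}; the paper handles it by an additional (easy) comparison of the $B$-action on $\mathfrak m_{e_2+e_1}\subset\mathfrak{so}_{2n}$ with that on the short-root part of $\mathfrak m_{2e_1}\subset\mathfrak{sp}_{2n}$, thereby reducing it to the $C_n$ case.
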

Panyushev shows, using Pyasetskii correspondence that these two conjectures are equivalent. 

Taking into account that by \cite{T} for $B-$orbits $\mathcal B,\mathcal B'$ one has $\mathcal B'$ is in the boundary of $\mathcal B$ iff ${\rm codim}_{\overline{\mathcal B}}\mathcal B'=1$ the part $(ii)$ of the conjecture follows straightforwardly from part $(i)$.

In cases of $A_n$ and $C_n$ both adjoint and coadjoint $B-$orbits of nilpotency order 2 are indexed by involutions \cite{B-M, Ig, Ig2, Mel} and by \cite{Ig, Ig2}  for involutions $\sigma,\sigma'\in W$ one has $\B^*_\sigma\subset\overline{\B^*}_{\sigma'}$ if and only if $\sigma\leq \sigma'$ so that the conjecture is a private case of a more general phenomenon.  

As for $B_n$ and $D_n$ we were informed by M. Ignatyev that  general description of inclusions of coadjoint $B-$orbit closures of nilpotent order 2  is not given by restriction of Bruhat order to involutions. We think that this happens because of the same difficulties with bijection between the strongly orthogonal sets and involutions that are described above.

For adjoint orbits in $A_n$ and $C_n$, in general, the combinatorial order on involutions defined by the inclusion of $B-$orbit closures of nilpotency order 2 is not connected to Bruhat order. However, for $B-$orbits in an abelian nilradical the conjecture is obtained as a straightforward corollary of  \cite{Mel, B-M}.
  
In this Note we reprove the conjecture for $A_n$ and $C_n$ and prove it for $B_n$ and $D_n$ for adjoint case. We also provide a simple combinatorial expression for $\ell(\sigma)$ for involutions in $S_n,W_{C_n}$ and $W_{D_n}$. To do this we introduce link patterns.  May be the expression can be obtained from the results of F. Incitti and is known to experts, but we have not found this result in the literature.
\section {Link patterns and $\ell(\sigma)$ for the Weyl group}\label{2}
 Recall that Weyl group of $\mathfrak{sl}_n$ is $S_n$ and its action on roots is obtained by extending linearly  $w(e_i)=e_{w(i)}$. 
 Weyl group $W_{C_n}$ of either $\mathfrak{sp}_{2n}$ or $\mathfrak{so}_{2n+1}$ is a group of  maps from $\{-n,\ldots,-1,1,\ldots,n\}$ onto itself symmetric around zero, namely    $i\mapsto j \ \Leftrightarrow\ -i\mapsto -j$ and its action  on roots is obtained by extending linearly $w(e_i)={\rm sign}(w(i))e_{|w(i)|}$.
Finally,  Weyl group $W_{D_n}$ is a subgroup of $W_{C_n}$  of maps sending even number of positive numbers to negative numbers. It acts on roots exactly in the same way as $W_{B_n}$. 

A {\it link pattern} on $n$ points with $k$ arcs is a graph  on $n$ (numbered) vertexes (drawn on a horizontal line) with $k$  disjoint edges $\{(i_s,j_s)\}_{s=1}^k$  (that is, $\{i_s,j_s\}\cap \{i_t,j_t\}=\emptyset$ for $1\leq s\ne t\leq k$) drawn over the line and called arcs. Vertex $f\not\in\{i_s,j_s\}_{s=1}^k$ is called a 
 {\it fixed point}.

 A strongly orthogonal set $\{e_{j_s}-e_{i_s}\}_{s=1}^k$ in $\mathfrak{sl}_n$ (or corresponding involution in  $S_n$) can be drawn as a link pattern on $1,\ldots,n$ with edges$\{i_s,j_s\}_{s=1}^k$; respectively a strongly orthogonal set in $C_n$ (or corresponding involution in $W_{C_n}$) can be drawn as a  link pattern symmetric around zero on $-n,\ldots,-1,1,\ldots,n$ where $2e_i$ corresponds to arc $(-i,i)$ and  $e_j\pm e_i$ for $0<i<j\leq n$ corresponds to two arcs $(\mp i,j)$ and $(\pm i,-j)$. 
 Respectively, for an involution of $W_{C_n}$ to be an element of  $W_{D_n}$ we need the even number of cycles of type $(-i,i)$ so that it can be drawn as a link pattern on $-n,\ldots,-1,1,\ldots,n$ symmetric around zero  with even number of arcs over zero.

 Given a strongly orthogonal set $\mathcal S=\{e_{j_s}- e_{i_s}\}_{s=1}^m$ (resp. $\mathcal S=\{e_{j_s}\mp e_{i_s}\}_{s=1}^l\cup\{e_{2k_t}\}_{t=1}^m$) let  $P_{\mathcal S}$ be the corresponding link pattern.  Let $|\mathcal S|$ denote the number of arcs in $P_{\mathcal S}$.
 Note that in case of $\mathfrak{sl}_n$ one has $\#(\mathcal S)=|\mathcal S|$; in case of $C_n$ or $D_n$ one has $\#(\mathcal S)\leq |\mathcal S|\leq 2\#(\mathcal S)$ (depending on the roots).

 Let $(a_1,b_1)\ldots(a_m,b_m)$, where $m=|\mathcal S|$, be the list of arcs of $P_{\mathcal S}$ written in such a way that $a_i<b_i.$ We also need the following statistics on $P_{\mathcal S}$:
\begin{itemize}
\item[i)] set $c(a_s,b_s):=\#\{t\ :\ a_t<a_s <b_t<b_s\}$ to be the number of arcs crossing the given arc $(a_s,b_s)$ on the left and
$c(\mathcal S):=\sum\limits_{s=1}^m c(a_s,b_s)$ to be the total number of crosses;
\item[ii)] set $r(a_s,b_s):=\#\{t\ :\ a_t>b_s\}$ to be the number of arcs to the right of the given arc $(a_s,b_s)$ and $r(\mathcal S):=\sum\limits_{s=1}^m r(a_s,b_s)$ to be the total number of arcs to the right of some arc;
\item[iii)]\ set $b(a_s,b_s):=\#\{p\ :\ a_s<p<b_s\ {\rm and}\ p\not\in \{a_t,b_t\}_{t=1}^m\}$ to be the number of fixed points under the given arc (bridge) $(a_s,b_s)$; and $b(\mathcal S):=\sum\limits_{s=1}^mb(a_s,b_s)$ to be the total number of fixed points under the arcs, or in other words the total number of bridges over all fixed points.
\end{itemize}
For example, let  $\mathcal S=\{e_2-e_1,\ e_6+e_3,\ 2e_4\}$ in $C_6$, then 
\begin{center}
\begin{picture}(130,60)(20,20)
\put(-40,35){$P_{\mathcal S}=$}
 \multiput(-10,40)(20,0){12} {\circle*{3}} 
 \put(-17,25){-6}
 \put(5,25){-5}
 \put(25,25){-4}
 \put(45,25){-3}
 \put(65,25){-2}
 \put(85,25){-1}
 \put(107,25){1}
 \put(127,25){2}
 \put(147,25){3}
 \put(167,25){4}
 \put(188,25){5}
 \put(208,25){6}
 \qbezier(-10,40)(70,105)(150,40)
 \qbezier(50,40)(130,105)(210,40)
 \qbezier(30,40)(100,105)(170,40)
 \qbezier(70,40)(80,60)(90,40)
 \qbezier(110,40)(120,60)(130,40)
\end{picture}
\end{center}
and $|\mathcal S|=5,$ $c(\mathcal S)=3,$ $r(\mathcal S)=1,$ $b(\mathcal S)=2.$ 
\begin{prop}Let $\mathcal S$ be a strongly orthogonal set in either $\mathfrak{sl}_n$ or $C_n$ ($D_n$) and let $\sigma=\sigma_{\mathcal S}$ be an involution in the corresponding Weyl group. 
\begin{enumerate} 
\item{For $\mathcal S=\{e_{j_s}-e_{i_s}\}_{s=1}^k$ in $\mathfrak{sl}_n$ one has 
$$\ell(\sigma)=2|\mathcal S|^2 -|\mathcal S|+2b(\mathcal S)-4r(\mathcal S)-2c(\mathcal S)$$}
\item{For $\mathcal S=\{e_{j_s}-e_{i_s}\}_{s=1}^a\cup\{2e_{k_s}\}_{s=1}^{d}\cup\{e_{m_s}+e_{l_s}\}_{s=1}^f$ in $C_n$  one has for $\sigma$ in $W_{C_n}$
 $$\ell(\sigma)=|\mathcal S|^2-a+b(\mathcal S)-c(\mathcal S)-2r(\mathcal S)$$}
 \item{For $\mathcal S=\{e_{j_s}-e_{i_s}\}_{s=1}^a\cup\{e_{k_s}\}_{s=1}^{2d}\cup\{e_{m_s}+e_{l_s}\}_{s=1}^f$ so that $\sigma\in W_{D_n}$ one has 
 $$\ell(\sigma)=|\mathcal S|^2 -|\mathcal S|+a+b(\mathcal S)-c(\mathcal S)-2r(\mathcal S)$$}
 \end{enumerate}
\end{prop}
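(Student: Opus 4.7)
My plan is to apply the root-theoretic length formula $\ell(\sigma)=\#\{\alpha\in R^+:\sigma(\alpha)\in R^-\}$ throughout: prove part (1) directly by a $P_{\mathcal S}$-based enumeration, and then reduce parts (2) and (3) to part (1) by viewing a signed permutation as an ordinary permutation of $\{-n,\ldots,-1,1,\ldots,n\}$.

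For part (1), $\sigma\in S_n$ is an involution whose nontrivial cycles are the arcs of $P_{\mathcal S}$, and a positive root $e_j-e_i$ with $i<j$ is negated exactly when $\sigma(i)>\sigma(j)$. I would partition the pairs $(i,j)$ with $i<j$ into four classes: (a) $\{i,j\}$ is an arc of $P_{\mathcal S}$, always contributing one inversion, for a total of $|\mathcal S|$; (b) $i,j$ are endpoints of two distinct arcs, contributing $0$, $2$ or $4$ inversions in the separated, crossing and nested configurations respectively, as a direct $4$-element check shows; (c) exactly one of $i,j$ is an arc endpoint and the other a fixed point, giving $2$ inversions per bridge point and $0$ otherwise; (d) both are fixed points, giving no inversion. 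Reassembling with the counts $c(\mathcal S)$, $r(\mathcal S)$, $\binom{|\mathcal S|}{2}-c(\mathcal S)-r(\mathcal S)$ and $b(\mathcal S)$ yields $2|\mathcal S|^2-|\mathcal S|+2b(\mathcal S)-4r(\mathcal S)-2c(\mathcal S)$.

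For parts (2) and (3), the key observation is that every signed permutation $\sigma\in W_{C_n}$ (resp.\ $W_{D_n}$) extends to an ordinary permutation $\tilde\sigma$ of the linearly ordered set $\{-n<\cdots<-1<1<\cdots<n\}$, and the cycle analysis of \S1.2 shows that $\tilde\sigma$ is an involution in $S_{2n}$ whose link pattern is precisely $P_{\mathcal S}$; part (1) then gives
\[\mathrm{inv}(\tilde\sigma)=2|\mathcal S|^2-|\mathcal S|+2b(\mathcal S)-4r(\mathcal S)-2c(\mathcal S).\]
A short case analysis of when $\sigma$ negates each type of positive root produces the standard length decompositions $\ell_{C_n}(\sigma)=\mathrm{inv}(\sigma)+\mathrm{neg}(\sigma)+\mathrm{nsp}(\sigma)$ and $\ell_{D_n}(\sigma)=\mathrm{inv}(\sigma)+\mathrm{nsp}(\sigma)$, where $\mathrm{inv},\mathrm{neg},\mathrm{nsp}$ count, respectively, inversions of $(\sigma(1),\ldots,\sigma(n))$ as signed integers, indices $i$ with $\sigma(i)<0$, and pairs $i<j$ with $\sigma(i)+\sigma(j)<0$. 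Splitting the inversions of $\tilde\sigma$ by sign patterns (both positive, both negative, mixed) yields $\mathrm{inv}(\tilde\sigma)=2\,\mathrm{inv}(\sigma)+\mathrm{neg}(\sigma)+2\,\mathrm{nsp}(\sigma)$, so that
\[2\,\ell_{C_n}(\sigma)=\mathrm{inv}(\tilde\sigma)+\mathrm{neg}(\sigma),\qquad 2\,\ell_{D_n}(\sigma)=\mathrm{inv}(\tilde\sigma)-\mathrm{neg}(\sigma).\]
Reading $\mathrm{neg}(\sigma)$ off $P_{\mathcal S}$ ($d+2f$ in type $C$, $2d+2f$ in type $D$) and substituting $|\mathcal S|=2a+d+2f$ (resp.\ $2a+2d+2f$) yields the closed forms in (2) and (3) by direct arithmetic.

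The main obstacle is securing the two preliminary identities: the length decompositions $\ell=\mathrm{inv}+\mathrm{neg}+\mathrm{nsp}$ and $\ell=\mathrm{inv}+\mathrm{nsp}$, together with $\mathrm{inv}(\tilde\sigma)=2\,\mathrm{inv}(\sigma)+\mathrm{neg}(\sigma)+2\,\mathrm{nsp}(\sigma)$. Both are short but each requires a careful case-by-case analysis: the first examines when $\sigma(e_j\pm e_i)$ and $\sigma(2e_k)$ (or $\sigma(e_k)$) lie in $R^-$ under the four sign patterns of $(\sigma(i),\sigma(j))$, while the second classifies index pairs of $\tilde\sigma$ in $\{-n,\ldots,n\}\setminus\{0\}$ by the signs of the two indices. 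Once these are in hand the remaining computation is mechanical, and the $-a$ versus $+a$ constant distinguishing (2) from (3) emerges from the arithmetic of $|\mathcal S|$ against $\mathrm{neg}(\sigma)$.
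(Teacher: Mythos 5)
Your proposal is correct, and the interesting divergence from the paper is in part (1). The paper proves (1) by a double induction (on $n$ and on the number of arcs), writing $\sigma=\sigma'(i,n)$ and tracking case by case how each positive root $e_t-e_s$ behaves, then how the statistics $c$, $b$, $r$ change when the arc $(i,n)$ is adjoined; this requires some delicate bookkeeping (e.g.\ the auxiliary count $u(i,n)$ of arcs nested under $(i,n)$). You instead compute $\ell(\sigma)=\#\{i<j:\sigma(i)>\sigma(j)\}$ in closed form by classifying pairs $(i,j)$ according to the arc/fixed-point status of their entries, and the local contributions ($1$ per arc; $0/2/4$ for separated/crossing/nested arc pairs; $2$ per bridge point) reassemble via $\binom{|\mathcal S|}{2}=c(\mathcal S)+r(\mathcal S)+\#\{\text{nested pairs}\}$ into exactly the stated formula -- I checked the four-element configurations and the final arithmetic, and they are right. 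This direct enumeration is cleaner and makes the provenance of each term transparent, at the cost of having to observe that $c(\mathcal S)$ and $r(\mathcal S)$, though defined asymmetrically, count unordered crossing and separated pairs exactly once each. For parts (2) and (3) your route is essentially the paper's: both pass to the permutation $\tilde\sigma$ of $\{-n,\dots,-1,1,\dots,n\}$, apply part (1) to the symmetric link pattern, and correct by the number of negated long (resp.\ short) roots; your $\mathrm{neg}(\sigma)$ is precisely the paper's $x(\sigma)=d+2f=|\mathcal S|-2a$, and your identities $2\ell_{C_n}(\sigma)=\mathrm{inv}(\tilde\sigma)+\mathrm{neg}(\sigma)$ and $2\ell_{D_n}(\sigma)=\mathrm{inv}(\tilde\sigma)-\mathrm{neg}(\sigma)$ are the paper's halving relations in the standard $(\mathrm{inv},\mathrm{neg},\mathrm{nsp})$ dress. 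The two auxiliary facts you flag as the remaining work (the type $B/C/D$ length decompositions and the inversion-splitting identity for $\tilde\sigma$) are standard and provable by exactly the sign-pattern case analysis you describe, so there is no gap.
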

\begin{proof}
We prove (1) by the induction on $|\sigma|$ and induction on $n$. It is trivial for $n=2.$ 
 Assume it is true for $\sigma\in S_{n-1}$ and show for $\sigma\in S_n$. Recall that $s_{e_j-e_i}=(i,j)$ in cyclic form so that 
 $|\mathcal S|=1$ iff $\sigma_{\mathcal S}=(i,j)$. If $(i,j)\ne(1,n)$ we can regard it as an element of
$S_{n-1}$ so that $\ell((i,j))$ is obtained by induction. For $(1,n)$ one has that $(1,n)(e_t-e_s)$ is negative iff $t=n$ or $s=1$ so that
  $\ell((1,n))=2n-3$.
On the other hand $b((1,n))=n-2$ and $c((1,n))=r((1,n))=0$ so that the expression is satisfied.

Now assume this is true for $\sigma_{\mathcal S'}\in S_{n}$ where $|\mathcal S'|\leq k-1$ and show for $\sigma_{\mathcal S}$ of $|\mathcal S|=k.$
Let $\sigma=\sigma'(i,j)$ where $\sigma'=(i_1,j_1)\ldots(i_{k-1},j_{k-1})$ and $j>j_s$ for any $1\leq s\leq k-1.$
If $j<n$ we can regard $\sigma$ as an element of $S_{n-1}$ and the result is obtained by induction on $n.$ If $j=n$ one has
$$\sigma(e_t-e_s)=\sigma'(i,n)(e_t-e_s)=\left\{\begin{array}{lll}-(e_n-e_i)&{\rm if}\  (s,t)=(i,n)&(I)\\
                                                                   e_i-\sigma'(e_s) &{\rm if}\ t=n,\ s\ne i&(II)\\
                                                                   e_n-\sigma'(e_s)&{\rm if}\ t=i&(III)\\
                                                                   \sigma'(e_t)-e_n&{\rm if}\ s=i,\ t<n&(IV)\\
                                                                   \sigma'(e_t-e_s)&{\rm otherwise}&(V)\\
                                                                   \end{array}\right.$$
Take into account that
\begin{itemize}
\item{}$\sigma'(e_n-e_i)=e_n-e_i$ so that case (I) adds 1 to the length;
\item{} $\sigma'(e_n-e_s)=e_n-\sigma'(e_s)\in R^+$. On the other hand $e_i-\sigma'(e_s)\in R^-$ exactly for $n-1-i$ roots since for every $i<s<n$ either $\sigma'(s)=s$ or there exists $r<j$ such that $\sigma'(r)=s.$ Thus $(II)$ adds $(n-1-i)$ to the length;
\item{} $\sigma(e_i-e_s)=e_n-\sigma'(e_s)\in R^+$ always. On the other hand for any $(i_s,j_s)$ such that $i_s<i<j_s$ one has
$\sigma'(e_i-e_{i_s})=e_i-e_{j_s}\in R^-$ so that in case $(III)$
 we have to reduce $c(i,n)$ from the length;
 \item{} $\sigma(e_t-e_i)=\sigma'(e_t)-e_n\in R^-$ for all $t\ :\ i<t$ and $\sigma'(e_t-e_i)=\sigma'(e_t)-e_i\in R^-$ iff $t=j_s$ where $i_s<i<j_s<j.$
 Thus case $(IV)$ adds $n-1-i-c(i,n)$ to the length;
\item{} Case $(V)$ does not add anything to the length.
 \end{itemize}
 Summarizing, we get  $\ell(\sigma)=2(n-i)-1-2c(i,n)+\ell(\sigma').$

 Put $u(i,n):=\#\{t\ :\ i<i_t,j_t<n\}$ to be the number of arcs under $(i,n)$.
 One has:\\
 $c(\mathcal S)=c(\mathcal S')+c(i,n)$;\\
 $b(\mathcal S)=b(\mathcal S')-c(i,n)+(n-1-i)-c(i,n)-2u(i,n)=b(\mathcal S')+(n-1-i)-2c(i,n)-2u(i,n)$\\
 $r(\mathcal S)=r(\mathcal S')+ (k-1)-c(i,n)-u(i,n)$ since for any $(i_s,j_s)$ it is either to the left of $(i,n)$ or under $(i,n)$ or crosses it on the left.\\
Taking all this into account we get straightforwardly $\ell(\sigma)=2k^2-k+2b(\mathcal S)-2c(\mathcal S)-4r(\mathcal S)$ in accordance with the expression.

(2) Let $\mathcal S=\{e_{j_s}-e_{i_s}\}_{s=1}^a\cup\{2e_{k_s}\}_{s=1}^{d}\cup\{e_{m_s}+e_{l_s}\}_{s=1}^f$ in $C_n$
so that $|\mathcal S|=2a+d+2f$ and let $\sigma=\sigma_{\mathcal S}$. Taking into account that $s_{2e_i}=(-i,i)$, $s_{e_j\pm e_i}=(\mp i,j)(\pm i,-j)$ by (i) its length as an element of $S_{2n}$ is $\ell_{S_{2n}}(\sigma)=|\mathcal S|^2 -|\mathcal S|+2b(\mathcal S)-4r(\mathcal S)-2c(\mathcal S)$.
On the other hand, in $C_n$ all the short roots are sums (up to sign) of two roots in $\mathfrak{sl}_{2n}$. Let $x(\sigma)$ be the number of positive long roots $2e_s$ such that 
$\sigma(2e_s)\in R^-.$ Then $\ell(\sigma)=\frac{1}{2}(\ell_{S_{2n}}(\sigma)+x(\sigma)).$ Further, note that $s_{e_j-e_i}(2e_s)\in R^+$ always, 
$$s_{2e_k}(2e_s)=\left\{\begin{array}{ll} -2e_s&{\rm if}\ s=k;\\
                                         2e_s&{\rm otherwise.}\\
																				\end{array}\right.\quad{\rm and}\quad
s_{e_j+e_i}(2e_s)= \left\{\begin{array}{ll} -2e_s&{\rm if}\ s=i,j;\\
                                         2e_s&{\rm otherwise.}\\
																				\end{array}\right.$$
Thus $x(\sigma)=d+2f=|\mathcal S|-2a.$ Summarizing, we get																				
  $\ell(\sigma)=|\mathcal S|^2-a+b(\mathcal S)-c(\mathcal S)-2r(\mathcal S)$.

(3)  Finally let $\mathcal S=\{e_{j_s}-e_{i_s}\}_{s=1}^a\cup\{e_{k_s}\}_{s=1}^{2d}\cup\{e_{m_s}+e_{l_s}\}_{s=1}^f$ so that $|\mathcal S|=2a+2d+2f$
and $\sigma\in W_{D_n}$. By (2) its length as an element of $W_{B_n}$ is $\ell_{B_n}(\sigma)=|\mathcal S|^2-a+b(\mathcal S)-c(\mathcal S)-2r(\mathcal S)$. Let $x(\sigma)$ be the number of positive short roots $e_s$ such that 
$\sigma(e_s)\in R_{B_n}^-.$ Then $\ell(\sigma)=\ell_{B_n}(\sigma)-x(\sigma).$ As it is shown in (2) $x(\sigma)=|\mathcal S|-2a.$ By a straightforward computation we get expression (3)
which completes the proof.
\end{proof}
\section{The proof of Panyushev's conjecture}

\subsection{Case $\mathfrak{sl}_n$}\label{3.1}
It is known that the conjecture is true for $\mathfrak{sl}_n$ (cf. \cite{Pan}). The proof is straightforward and we provide it in short here since we use it in what follows.

Let $S_n$ denote a standard symmetric group and $S_{[i,j]}$ a symmetric group on the elements $i,i+1,\ldots,j$.  For a strongly orthogonal set $\mathcal S\subset R^+$ let $\pi_{i,j}(\mathcal S)=\mathcal S\cap\{e_l-e_k\}_{i\leq k<l\leq j}$. By \cite{Mel}, $\B_{\mathcal S'}\subset\overline{\B}_{\mathcal S}$ for $\mathcal S,\mathcal S'\subset R^+$ strongly orthogonal sets in $\mathfrak{sl}_n$  iff for any $i,j\ :\ 1\leq i<j\leq n$ one has $|\pi_{i,j}(\mathcal S')|\leq |\pi_{i,j}(\mathcal S)|.$ Moreover these inclusions are generated by elementary moves on link patterns defined as follows:
\begin{enumerate}
\item{Let $e_j-e_i\in \mathcal S$  and let $\mathcal S'$ be obtained from $\mathcal S$ by exclusion of this root. Then $\B_{\mathcal S'}\subset \overline{\B}_{\mathcal S}$;}
\item{ Let $e_j-e_i\in \mathcal S$ and let $k>j$ be a fixed point of $P_{\mathcal S}$. Let $\mathcal S'$ be obtained from $\mathcal S$ by changing $e_j-e_i$ to $e_k-e_i$, then $\B_{\mathcal S'}\subset \overline{\B}_{\mathcal S}$;}
 \item{ Let $e_j-e_i\in \mathcal S$ and let $k<i$ be a fixed point of $P_{\mathcal S}$. Let $\mathcal S'$ be obtained from $\mathcal S$ by changing $e_j-e_i$ to $e_j-e_k$, then $\B_{\mathcal S'}\subset \overline{\B}_{\mathcal S}$;} 
\item{ Let $e_l-e_i, e_k-e_j\in \mathcal S$ be such that $i<j<k<l$. Let $\mathcal S'$ be obtained from $\mathcal S$ by changing $e_l-e_i, e_k-e_j$ to 
$e_k-e_i, e_l-e_j$, then $\B_{\mathcal S'}\subset \overline{\B}_{\mathcal S}$.}
\item{ Let $e_j-e_i, e_l-e_k\in \mathcal S$ be such that $j<k$. Let $\mathcal S'$ be obtained from $\mathcal S$ by changing $e_j-e_i, e_l-e_k$ to 
$e_k-e_i, e_l-e_j$, then $\B_{\mathcal S'}\subset \overline{\B}_{\mathcal S}$;}
\end{enumerate} 

For $\mathfrak m_{e_{k+1}-e_k}$ one has $W_{e_{k+1}-e_k}=S_k\times S_{[k+1,n]}$ and $\widehat w=[k,\ldots,1,n,\ldots,k+1]$.

Note that $\B_{\mathcal S}\subset\mathfrak m_{e_{k+1}-e_k}$ iff $\mathcal S=\{e_{j_s}-e_{i_s}\}_{s=1,\ i_s\leq k,j_s\geq k+1}^m$, and therefore,
$\widehat w\sigma_{\mathcal S}\widehat w=\sigma_{\widehat {\mathcal S}}$ where $\widehat{\mathcal S}=\{e_{n+1-j_s}-e_{k+1-i_s}\}_{s=1}^m$.
 
Since on one hand inclusion of $B-$orbit closures is generated by elementary moves on link patterns  and on the other hand Bruhat order is generated by products by $(i,j)$  we have only to compare these two actions.
 
For $\mathcal S=\{e_{j_s}-e_{i_s}\}_{s=1}^m$ put $\langle \sigma_{\mathcal S}\rangle:=\{i_s,j_s\}_{s=1}^m$ to be the list of end points of $P_{\mathcal S}$.
 We have to take into account that the restriction of Bruhat order to involutions is generated by 
$\sigma<\sigma(i,j)$ only if $\{i,j\}\cap\langle\sigma\rangle=\emptyset$, otherwise we have to compare $\sigma$ and $(i,j)\sigma(i,j)$.

Let $\sigma=\sigma_{\mathcal S}$ where $\mathcal S\subset \overline{R}^+_{e_{k+1}-e_k}$. Note that  in order for $(i,j)\sigma$ in the first case (resp. $(i,j)\sigma(i,j)$ in the second case) to be $\sigma_{\mathcal S'}$ for $\mathcal S'\subset \overline{R}^+_{e_{k+1}-e_k}$ one needs to choose $i\leq k$ and $j\geq k+1$ (resp. either $i,j\leq k$ or $i,j\geq k+1$).

\begin{itemize}
\item[i)] $\sigma\rightarrow \sigma(i,j)$: Let $\mathcal S'=\{e_{j_s}-e_{i_s}\}_{s=1}^l$ and let $e_j-e_i$
 be strongly orthogonal to $\mathcal S'$ then $\mathcal S=\mathcal S'\cup \{e_j-e_i\}$ is strongly orthogonal so that  $\B_{\mathcal S'}\subset \overline{\B}_{\mathcal S}$ by (1) on one hand and on the other hand  $\widehat w(i,j)\sigma_{\mathcal S'}\widehat w=
(k+1-i,n+1-j)\sigma_{\widehat{\mathcal S'}}> \sigma_{\widehat{\mathcal S'}}$;
\item[ii)] $\sigma\rightarrow (i,j)\sigma(i,j)$ where either $i,j\leq k$ or $i,j\geq k+1$ and $|\{i,j\}\cap\langle \sigma\rangle|=1$: Let $\mathcal S=\{e_{j_1}-e_{i_1}\}\cup\mathcal T$ where $\mathcal T=\{e_{j_s}-e_{i_s}\}_{s=2}^m$. Let $j=i_1$ and $i\not\in\{i_s\}_{s=1}^m$ (resp. $i=j_1$ and $j\not\in \{j_s\}_{s=1}^m$).
Let $\mathcal S'=\{e_{j_1}-e_i\}\cup\mathcal T$ (resp. $\mathcal S'=\{e_{j}-e_{i_1}\}\cup\mathcal T$). Then on one hand by (2) $\B_{\mathcal S'}\subset\overline{\B}_{\mathcal S}$
iff $i<i_1$ (resp.  by (3) iff $j>j_1$). On the other hand $\widehat w\sigma_{\mathcal S'}\widehat w=(k+1-i,n+1-j_1)\sigma_{\widehat{\mathcal T}}$ (resp.
$\widehat w\sigma_{\mathcal S'}\widehat w=(k+1-i_1,n+1-j)\sigma_{\widehat{\mathcal T}}$) 
and  $\widehat w\sigma_{\mathcal S}\widehat w=(k+1-i_1,n+1-j_1)\sigma_{\widehat{\mathcal T}}$ so that $\widehat w\sigma_{\mathcal S'}\widehat w<\widehat w\sigma_{\mathcal S}\widehat w$ iff $i<i_1$ (resp. $j>j_1$).  
\item[iii)]\ $\sigma\rightarrow (i,j)\sigma(i,j)$ where $\{i,j\}\subset \langle \sigma\rangle:$ Let $\mathcal S=\{e_{j_1}-e_{i_1},e_{j_2}-e_{i_2}\}\cup \mathcal T$ where $i_1<i_2(\leq k)$ and $(i,j)=(i_1,i_2)$ (this is equal to action on $\sigma$ by $(i,j)=(j_1,j_2)$). Then $(i,j)\sigma(i,j)=\sigma_{\mathcal S'}$ where $\mathcal S'=\{e_{j_1}-e_{i_2},e_{j_2}-e_{i_1}\}\cup \mathcal T$. 
On one hand by (4)
$\B_{\mathcal S'}\subset \overline{\B}_{\mathcal S}$ iff $j_1>j_2$, on the other hand 
$\widehat w\sigma_{\mathcal S'}\widehat w =(k+1-i_1,n+1-j_2)(k+1-i_2,n+1-j_1)\sigma_{\widehat{\mathcal T}}$ and $\widehat w\sigma_{\mathcal S}\widehat w =(k+1-i_1,n+1-j_1)
(k+1-i_2,n+1-j_2)\sigma_{\widehat{\mathcal T}}$ so that  $\widehat w\sigma_{\mathcal S'}\widehat w<\widehat w\sigma_{\mathcal S}\widehat w$ iff $j_2<j_1.$
\end{itemize} 

\subsection{Case $\mathfrak{sp}_{2n}$}\label{3.2}
For $\mathfrak{sp}_{2n}$ the unique abelian nilradical is $\mathfrak m_{2e_1}$.  In this case $W_{2e_1}=S_n$ and $\widehat w=[n,\ldots,1]$.
One has $\B_{\mathcal S}\subset\mathfrak m_{2e_1}$ iff $\mathcal S=\{2e_{k_s}\}_{s=1}^d\cup\{e_{m_s}+e_{l_s}\}_{s=1}^f$.

 In this case the conjecture  is obtained as a straightforward corollary of the result for $\mathfrak{sl}_{2n}$ and the  following facts:
 \begin{enumerate}
 \item{A set of strongly orthogonal roots $\mathcal S$ in $C_n$ can be considered as a set $\widetilde{\mathcal S}$ of $|\mathcal S|$ strongly orthogonal roots in 
$\mathfrak {sl}_{2n}$. In these terms 
for $\B_{\mathcal S},\B_{\mathcal S'}\subset R^+$ in $\mathfrak{sp}_{2n}$  one has by \cite{B-M} $\B_{\mathcal S'}\subset\overline \B_{\mathcal S}$  iff they are restriction to 
$\mathfrak{sp}_{2n}$ of the orbits $\B'_{\widetilde{\mathcal S}}\, ,\B'_{\widetilde{\mathcal S'}}$ from $\mathfrak{sl}_{2n}$ such that  $\B'_{\widetilde{\mathcal S'}}\subset\overline \B'_{\widetilde{\mathcal S}}$;}
  \item{  $\mathfrak m_{2e_1}$ of $\mathfrak{sp}_{2n}$ is the restriction to $\mathfrak{sp}_{2n}$ of $\mathfrak m_{e_{n+1}-e_n}$ of $\mathfrak{sl}_{2n}$;}
  \item{ $\sigma,\sigma'\in W_{C_n}$ are elements of $S_{2n}$ and $\sigma'<\sigma$ in $W_{C_n}$ iff $\sigma'<\sigma$ in $S_{2n}$ -- this is shown for example in \cite[\S 4]{Pr};}
  \item{ $\widehat w\in W_{2e_1}$ is identified with the maximal element of $S_n\times S_{[n+1,2n]}.$}
  \end{enumerate}
  \subsection{Case $\mathfrak{so}_{2n+1}$}\label{3.3}
For $\mathfrak{so}_{2n+1}$ the unique abelian nilradical  is $\mathfrak{m}_{e_n-e_{n-1}}$. In this case $W_{e_n-e_{n-1}}=W_{B_{n-1}}$ and $\widehat w=s_{e_1}\ldots s_{e_{n-1}}$.

$\B_{\mathcal S}\subset\mathfrak m_{e_n-e_{n-1}}$  if either $\mathcal S=\{e_n\}$ or $\mathcal S=\{e_n\pm e_i\}$,  or  $\mathcal S=\{e_n-e_i,e_n+e_i\}$ for $1\leq i<n$.
Note that $\widehat ws_{e_n\pm e_i}\widehat w=s_{e_n\mp e_i}$, $\widehat ws_{e_n}\widehat w=s_{e_n}$
and $\widehat w\sigma_{\{e_n-e_i,e_n+e_i\}}\widehat w=\sigma_{\{e_n-e_i,e_n+e_i\}}.$

The restriction of Bruhat order to our set of involutions is as follows (cf. \cite{Pr}, for example):

$$\begin{array}{ll}
s_{e_n+e_{n-1}}>s_{e_n+e_{n-2}}>\ldots>s_{e_n+e_1}>s_{e_n-e_1}>\ldots>s_{e_n-e_{n-1}};\quad\ s_{e_n}>s_{e_n-e_1}& (i)\\
s_{e_n-e_{n-1}}s_{e_n+e_{n-1}}>\ldots>s_{e_n-e_1}s_{e_n+e_1}>s_{e_n};\quad\ s_{e_n-e_i}s_{e_n+e_i}>s_{e_n+e_i}& (ii)\\

\end{array}$$
Also $s_{e_n+e_i}$ and $s_{e_n}$ are incompatible for any $i<n$. 
As for inclusions of $B-$orbit closures one has
\begin{itemize}
\item[i)] In order to show $\B_{\{e_n-e_{i-1}\}}\subset\overline{\B}_{\{e_n-e_i\}}$ note that $Exp(a X_{e_i-e_{i-1}}).X_{e_n-e_i}=X_{e_n-e_i}-aX_{e_n-e_{i-1}}$ so that by torus action we get $X_{e_n-e_{i-1}}\in\overline{\B}_{\{e_n-e_i\}}$.
This corresponds to
$s_{e_n+e_i}>s_{e_n+e_{i-1}}$ for $i\ :\ 2\leq i\leq n-1.$\\
Let us show that $\B_{\{e_n+e_1\}}\subset \overline \B_{\{e_n-e_1\}}$. 
Indeed, 
$Exp(a X_{e_1}).X_{e_n-e_1}=X_{e_n-e_1}-a X_{e_n}-\frac{a^2}{2}X_{e_n+e_1}$.
Further by torus action we  get 
 $X_{e_n+e_1}\in\overline \B_{\{e_n-e_1\}}$. This corresponds to $s_{e_n+e_1}>s_{e_n-e_1}$.\\
To show $\B_{\{e_n+e_{i+1}\}}\subset\overline\B_{\{e_n+e_i\}}$ note that  
$Exp(a X_{e_{i+1}-e_i}).X_{e_n+e_i}=X_{e_n+e_i}+a X_{e_n+e_{i+1}}$ so that by torus action we get
  $X_{e_n+e_{i+1}}\in \overline{\B}_{\{e_n+e_i\}}$. This corresponds to $s_{e_n-e_i}>s_{e_n-e_{i+1}}$ for $i\ :\ 1\leq i\leq n-2$.

Exactly in the same way, $Exp(a X_{e_1}).X_{e_n}=X_{e_n}+a X_{e_n+e_1}$ and then by torus action we get $X_{e_n+e_1}\in\overline{\B}_{\{e_n\}}$ which corresponds to  $s_{e_n-e_1}<s_{e_n}$. 

Obviously $\B_{\{e_n-e_i\}}$ and $\B_{\{e_n\}}$ are incompatible.
\item[ii)] To show $\B_{\{e_n-e_i,e_n+e_i\}}\subset\overline \B_{\{e_n-e_j,e_n+e_j\}}$ for $1\leq i<j\leq n-1$ we note as before that
$Exp(a(X_{e_j-e_i}+X_{e_j+e_i})).(X_{e_n-e_j}+X_{e_n+e_j})=X_{e_n-e_j}+X_{e_n+e_j} -a(X_{e_n-e_i}+X_{e_n+e_i})$ and then by torus action  we get $X_{e_n-e_i}+X_{e_n+e_i}\in \overline \B_{\{e_n-e_j,e_n+e_j\}}$ which corresponds to $s_{e_n-e_j}s_{e_n+e_j}>s_{e_n-e_i}s_{e_n+e_i}$ for $1\leq i<j\leq n-1.$

To show $\B_{\{e_n\}}\subset\overline \B_{\{e_n-e_j,e_n+e_j\}}$ for $1\leq j\leq n-1$ note that
 $Exp(\sqrt{2} X_{e_j}).(X_{e_n-e_j}+X_{e_n+e_j})=X_{e_n-e_j}-\sqrt{2}X_{e_n}$
 and then by torus action we get $X_{e_n}\in \overline \B_{\{e_n-e_j,e_n+e_j\}}$ for any $1\leq j\leq n-1$ which corresponds to
$s_{e_n-e_j}s_{e_n+e_j}>s_{e_n}$ for $1\leq j\leq n-1.$

 Obviously by torus action we get $X_{e_n-e_i}\in\overline \B_{\{e_n-e_i,e_n+e_i\}}$ which provides \\
$s_{e_n-e_i}s_{e_n+e_i}>s_{e_n+e_i}$ for $1\leq i\leq n-1.$
\end{itemize}
\subsection{Case $\mathfrak{so}_{2n}$}
Recall that there are 3 abelian nilradicals in the case of $\mathfrak{so}_{2n}$ namely $\mathfrak m_{e_2-e_1}\cong\mathfrak m_{e_2+e_1}$ and $\mathfrak m_{e_n-e_{n-1}}$.

Let us start with $\mathfrak m_{e_n-e_{n-1}}$ which can be obtained from the previous case.
In this case $W_{e_n-e_{n-1}}=W_{D_{n-1}}$ and 
 $$\widehat w=\left\{\begin{array}{ll} s_{e_1}\ldots s_{e_{n-1}}&{\rm if}\ n=2k+1;\\
s_{e_2}\ldots s_{e_{n-1}}&{\rm if}\ n=2k;\\
\end{array}\right.$$
$\B_{\mathcal S}\subset\mathfrak m_{e_n-e_{n-1}}$  if either $\mathcal S=\{e_n\pm e_i\}$ or  $\mathcal S=\{e_n-e_i,e_n+e_i\}$ for $1\leq i<n$.
 Note that $\widehat ws_{e_n\pm e_i}\widehat w=s_{e_n\mp e_i}$ for $i>1$, $\widehat ws_{e_n\pm e_1}\widehat w=\left\{\begin{array}{ll}s_{e_n\pm e_1}&{\rm if}\ n=2k;\\
s_{e_n\mp e_1}&{\rm if}\ n=2k+1;\\ \end{array}\right.$
and $\widehat w s_{e_n-e_i}s_{e_n+e_i}\widehat w=s_{e_n-e_i}s_{e_n+e_i}.$
 The restriction of Bruhat order from $W_{B_n}$ to $W_{D_n}$
provides
$$\begin{array}{ll}
s_{e_n+e_{n-1}}>s_{e_n+e_{n-2}}>\ldots>s_{e_n+e_1},\quad s_{e_n-e_1}>s_{e_n-e_2}>\ldots>s_{e_n-e_{n-1}}, & (i)\\
\quad\quad s_{e_n+e_2}>s_{e_n-e_1},\ s_{e_n+e_1}>s_{e_n-e_2}& (ii)\\
s_{e_n-e_{n-1}}s_{e_n+e_{n-1}}>\ldots>s_{e_n-e_1}s_{e_n+e_1},\, s_{e_n-e_i}s_{e_n+e_i}>s_{e_n+ e_i},\, s_{e_n-e_1}s_{e_n+e_1}>s_{e_n\pm e_1} & (iii)\\
\end{array}$$
The only differences with $W_{B_n}$ are that $s_{e_n+e_1},\ s_{e_n-e_1}$ are incompatible (they are of the same length by Proposition 1) and $s_{e_n}\not\in W_{D_n}.$ As for inclusions of $B-$orbit closures we have to take into account that inclusions of $B-$orbit closures in $D_n$ implies the inclusions of corresponding $B-$orbit closures in $B_n$ so that we have to check only the corresponding cases from \ref{3.3}. We get:
\begin{itemize}
\item[i)+ii)]\ \ \ \ \ Exactly as in $\mathfrak{so}_{2n+1}$ one has $\B_{\{e_n-e_{i-1}\}}\subset\overline{\B}_{\{e_n-e_i\}}$ for $i\ :\ 2\leq i\leq n-1$  which corresponds to
$s_{e_n+e_{n-1}}>\ldots>s_{e_n+e_2}$ and $s_{e_n+e_2}>\left\{\begin{array}{ll}s_{e_n+e_1}&{\rm if}\ n=2k+1;\\
s_{e_n-e_1}&{\rm if}\ n=2k;\\ \end{array}\right.$.
Further note that
$Exp(a X_{e_2+e_1}).X_{e_n-e_2}=X_{e_n-e_2}-a X_{e_n+e_1}$ so that by torus action we get $X_{e_n+e_1}\in\overline{\B}_{e_n-e_2}$ which corresponds to $s_{e_n+e_2}>\left\{\begin{array}{ll}s_{e_n-e_1}&{\rm if}\ n=2k+1;\\
s_{e_n+e_1}&{\rm if}\ n=2k;\\ \end{array}\right.$\\
 Exactly as in $\mathfrak{so}_{2n+1}$ one has   $\B_{e_n+e_{i+1}}\subset \overline{\B}_{\{e_n+e_i\}}$ for $1\leq i\leq n-2$ . This corresponds to 
	$s_{e_n-e_i}>s_{e_n-e_{i+1}}$ for $i\ :\ 2\leq i\leq n-2$ and $s_{e_n-e_2}<\left\{\begin{array}{ll}s_{e_n-e_1}&{\rm if}\ n=2k+1;\\
                                    s_{e_n+e_1}&{\rm if}\ n=2k;\\ \end{array}\right.$.\\
Let us show that $\B_{\{e_n+e_2\}}\subset \overline{\B}_{\{e_n-e_1\}}$. Indeed, $Exp(a X_{e_2+e_1}).X_{e_n-e_1}=X_{e_n-e_1}+a X_{e_n+e_2}$ so that by torus action we get  $X_{e_n+e_2}\in\overline \B_{\{e_n-e_1\}}$. This corresponds  to 
$s_{e_n-e_2}<\left\{\begin{array}{ll}s_{e_n+e_1}&{\rm if}\ n=2k+1;\\
                                    s_{e_n-e_1}&{\rm if}\ n=2k;\\ \end{array}\right.$.\\
 To finish (i) and (ii) we have to check that $\B_{e_n+e_1}\not\subset\overline{\B}_{e_n-e_1}.$ This
is obtained straightforwardly from the fact $\dim \B_{e_n+e_1}=\dim\B_{e_n-e_1}=n-1$. 
\item[iii)]\ Exactly as in \ref{3.3} one has $\B_{\{e_n-e_i,\, e_n+e_i\}}\subset\overline \B_{\{e_n-e_j,e_n+e_j\}}$ for $1\leq i<j\leq n-1$. 
 Obviously by torus action  we get $X_{e_n-e_j}\in\overline \B_{\{e_n-e_j,e_n+e_j\}}$  and $X_{e_n\pm e_1}\in \overline \B_{\{e_n-e_1,e_n+e_1\}}$
so we get all the relations from (iii). 
\end{itemize}

Since  $\mathfrak m_{e_2+e_1}\cong \mathfrak m_{e_2-e_1}$ it is enough to consider $\mathfrak m_{e_2+e_1}.$ One has
$$\mathfrak m_{e_2+e_1}=\bigoplus\limits_{1\leq i<j\leq n}\Co X_{e_j+e_i}$$
Comparing $\mathfrak m_{e_2+e_1}$ with $\mathfrak m_{2e_1}$ of $\mathfrak{sp}_n$ one can see at once that root vectors here correspond (up to sign in the sum) to root vectors in $\mathfrak m_{2e_1}$ for short roots. In particular, this is a subspace of matrices of nilpotency order 2. 
The truth of the conjecture for $\mathfrak m_{e_2+e_1}$ is obtained from its truth for $\mathfrak m_{2e_1}$ by the following facts:
\begin{enumerate}
\item{ The sets of strongly orthogonal roots in $\mathfrak m_{e_2+e_1}$ coincide with the sets of strongly orthogonal short roots 
in $\mathfrak m_{2e_1}$}
\item{Only for root $\alpha=e_j-e_i$ the action of $Exp(a X_{\alpha})$ on roots $X_{e_k+e_s}$ can be non-trivial both in $C_n$ and $D_n$ and this action in both cases coincide up to sign, apart from case $Exp(a X_{e_j-e_i}).X_{e_j+e_i}=\left\{\begin{array}{ll}X_{e_j+e_i}+2a X_{2e_j}&{\rm in}\ C_n;\\
                                                                                                        X_{e_j+e_i}&{\rm in}\ D_n;\\ \end{array}\right.$ which is irrelevant here.
Thus, $X_{\mathcal S'}\in \overline{\B}_{\mathcal S}$ for strongly orthogonal sets $\mathcal S',\ \mathcal S$ in $\mathfrak m_{e_2+e_1}$
iff   	$X_{\mathcal S'}\in \overline{\B}_{\mathcal S}$ in $\mathfrak m_{2e_1}$.}																																												
\item{ $\widehat w$ of $W_{e_2+e_1}$ (in $W_{D_n}$) is equal to $\widehat w$ of $W_{2e_1}$ (in $W_{C_n}$).}
\item{ Bruhat order restricted to multiplication of reflections of strongly orthogonal roots of type $e_i+e_j$ coincides for $W_{C_n}$ and $W_{D_n}$.
(cf., for example \cite[\S 4]{Pr}).}
\end{enumerate}
\bigskip

\noindent
{\bf Acknowledgements:}\ 
We would like to thank Dmitri Panyushev for sharing  his paper with us and for discussions during this work.

\end{document}